\let\reftagform@=\tagform@
\def\tagform@#1{\maketag@@@{(\ignorespaces\textcolor{purple}{#1}\unskip\@@italiccorr)}}
\renewcommand{\eqref}[1]{\textup{\reftagform@{\ref{#1}}}}
\DeclareUrlCommand\ULurl@@{%
  \def\UrlLeft{\uline\bgroup}%
  \def\UrlRight{\egroup}}
\def\ULurl@#1{\hyper@linkurl{\ULurl@@{#1}}{#1}}
\DeclareRobustCommand*\ULurl{\hyper@normalise\ULurl@}
\def\lessim{\ \lower4pt\hbox{$
		\buildrel{\displaystyle <}\over\sim$}\ }
\def\gessim{\ \lower4pt\hbox{$\buildrel{\displaystyle >}
		\over\sim$}\ }
\def\la{\langle}
\def\ra{\rangle}
\newcommand{\e}{\mathbb{E}}
\newcommand{\p}{\mathbb{P}}
\newcommand{\indi}{\ensuremath{\boldsymbol 1}}
\newtheorem{lemma}{\bf Lemma}
\newtheorem{theorem}{\bf Theorem}
\newtheorem{remark}{\bf Remark}
\newtheorem{example}{\bf Example}
\newtheorem{proposition}{\bf Proposition}
\newenvironment{Proof of lemma}{\noindent{\bf Proof of Lemma}}{\hfill$\Box$\newline}
\newenvironment{Proof of theorem}{\noindent{\bf Proof of Theorem}}{\hfill{\footnotesize${\square}$}\newline}
\newenvironment{Proof of theorems}{\noindent{\bf Proof of Theorems}}{\hfill$\Box$\newline}
\newenvironment{Proof of proposition}{\noindent{\bf Proof of Proposition}}{\hfill$\Box$\newline}
\newenvironment{Proof of propositions}{\noindent{\bf Proof of Propositions}}{\hfill$\Box$\newline}
\newenvironment{Proof of exercise}{\noindent{\it Proof of Exercise:}}{\hfill$\Box$}
\begin{document}

\title{On concentration properties of disordered Hamiltonians}

\author{Antonio Auffinger \thanks{Department of Mathematics. Email: auffing@math.northwestern.edu} 
        \\ \small{Northwestern University} 
	\and Wei-Kuo Chen  \thanks{School of Mathematics. Email: wkchen@umn.edu}
	\\ \small{University of Minnesota}
	}\maketitle

\footnotetext{MSC2000: Primary 60F10, 82D30.}

\begin{abstract}
We present an elementary approach to concentration of disordered Hamiltonians. Assuming differentiability of the limiting free energy $F$ with respect to the inverse temperature $\beta$, we show that the Hamiltonian concentrates around the energy level $F'(\beta)$  under the free energy and Gibbs average.
\end{abstract}

\maketitle

\section{Introduction and main results} 	
	
	The aim of this short note is to establish a general principle on the concentration of disordered Hamiltonians, arising from various contexts of statistical mechanics models, assuming the differentiability of the limiting free energy. To begin with, for each $N\geq 1$, let $(\Sigma_N,\mathcal{F}_N)$ be a measurable space and $\nu_N$ be a random probability measure on this space. We call $\Sigma_N$ a configuration space. A Hamiltonian $H_N$ is a stochastic process indexed by $\Sigma_N$ with
	\begin{align}\label{ass2}
	\e \int_{\Sigma_N}\exp \bigl(\beta |H_N(\sigma)|\bigr)\nu_N(d\sigma)<\infty,\,\,\forall \beta>0,
	\end{align} 
	where $\e$ is the expectation with respect to the randomness of $H_N$ and $\nu_N.$
	For a given (inverse) temperature $\beta\geq 0,$ 
	the free energy and Gibbs measure associated to $H_{N}$ are defined respectively as
		\begin{align*}
		F_N(\beta)&=\frac{1}{N}\log Z_{N}(\beta)
		\end{align*}
		and
    		\begin{align*}
    		G_{N,\beta}(d\sigma)&=\frac{\exp\bigl(\beta H_N(\sigma)\bigr)\nu_N(d\sigma)}{Z_N(\beta)},
    		\end{align*}
where $$Z_N(\beta):=\int_{\Sigma_N}\exp \big( \beta H_N(\sigma)\big)\nu_N(d\sigma)$$ is called the partition function. Here the assumption \eqref{ass2} justifies the definiteness of $F_N$ and $G_{N,\beta}.$ We emphasize that $F_N$ and $G_{N,\beta}$ are random objects depending on $H_N$ and $\nu_N.$ In particular, from H\"{o}lder's inequality, $F_N$ is convex in $\beta.$ 
Denote by 
$\left<\cdot\right>_\beta$ the expectation (Gibbs average) with respect to the Gibbs measure $G_{N,\beta}$, that is, for an integrable function $\Psi : \Sigma_{N} \to \mathbb R$, write 
\[
\left<\Psi \right>_\beta = \frac{1}{Z_N(\beta)}\int_{\Sigma_{N}} \Psi(\sigma) \exp \bigl(\beta H_{N}(\sigma)\bigr) \nu_{N}(d\sigma).
\]
       
       We assume that the following condition is in force throughout the remainder of the paper. Let $\delta>0$ and $\beta>0$ be fixed. Suppose that there exists a nonrandom function $F:(\beta-\delta,\beta+\delta)\rightarrow\mathbb{R}$ such that for any $\beta'\in(\beta-\delta,\beta+\delta)$, 
		\begin{align}
		\label{ass1}
			\lim_{N\rightarrow\infty}F_N(\beta')=F(\beta'), \,\,a.s.
		\end{align}
    		The assumption of $F$ being nonrandom appears in many examples of disordered systems as a consequence of concentration of measure, where 
in most cases 
\[ F(\beta) = \lim_{N\to \infty} \mathbb E F_{N}(\beta);
\] 		
see Example \ref{ex1} below.
    		First we show that the Hamiltonian is concentrated around a fixed energy level under the free energy if we assume that $F$ is differentiable at $\beta$.

    		\begin{theorem}\label{thm9} Assume that \eqref{ass1} holds and $F$ is differentiable at $\beta.$ For any $\lambda>0,$ define
    			\begin{align*}
    			\delta_N^+(\beta,\lambda)&=\frac{F_N(\beta+\lambda)-F_N(\beta)}{\lambda}-F'(\beta),\\
    			\delta_N^-(\beta,\lambda)&=\frac{F_N(\beta-\lambda)-F_N(\beta)}{\lambda}+F'(\beta).
    			\end{align*}
    			For any $c,c'>0$ with $c+c'<1$, let
    			\begin{align*}
    			\varepsilon_N&={N^{-c'}}+\max\bigl(\delta_N^+(\beta,N^{-c}),\delta_N^-(\beta,N^{-c}),0\bigr)
    			\end{align*}
    			and set
    			\begin{align*}
    			C_N&=\Bigl\{\sigma\in\Sigma_N:\Bigl|\frac{H_N(\sigma)}{N}-F'(\beta)\Bigr|\leq \varepsilon_N\Bigr\}
    			\end{align*}
    			for $N\geq 1.$ We have that
    			
    			\begin{enumerate}
    				\item[$(i)$] $\lim_{N\rightarrow\infty}\varepsilon_N=0$ a.s.
    				\item[$(ii)$] $G_{N,\beta}(C_N^c)\leq 2e^{-N^{1-(c+c')}},$ where $C_N^c$ is the complement of $C_N.$ 
    				\item[$(iii)$] For any $N\geq \frac{\log\log 2}{1-(c+c')},$ the following inequality holds,
    				$$\Bigl|\frac{1}{N}\log \int_{\Sigma_N}\indi_{C_N}(\sigma)\exp\big(\beta H_N(\sigma)\big)\nu_N(d\sigma)-F_N(\beta)\Bigr|\leq \frac{2e^{-N^{1-(c+c')}}}{N}.$$
    			where $\indi_{C_N}$ is the indicator function on $C_N.$
    			\end{enumerate}

    		\end{theorem}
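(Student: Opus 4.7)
My approach will treat the three parts in turn.

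For part~(i), I will invoke the convexity of $F_N$ on $(\beta-\delta,\beta+\delta)$ (a standing property from H\"older) together with the a.s.\ pointwise convergence $F_N(\beta')\to F(\beta')$ at each fixed $\beta'$. Convexity implies that the forward secant slope $\lambda\mapsto \lambda^{-1}(F_N(\beta+\lambda)-F_N(\beta))$ is nondecreasing in $\lambda>0$, so for any fixed $h>0$ and all $N$ with $N^{-c}\leq h$,
$$\frac{F_N(\beta)-F_N(\beta-h)}{h}\leq \frac{F_N(\beta+N^{-c})-F_N(\beta)}{N^{-c}}\leq \frac{F_N(\beta+h)-F_N(\beta)}{h}.$$
Sending $N\to\infty$ via \eqref{ass1} and then $h\to 0$ using differentiability of $F$ at $\beta$ squeezes $\delta_N^+(\beta,N^{-c})\to 0$ a.s.; the same argument on the other side handles $\delta_N^-$. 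Together with $N^{-c'}\to 0$ this yields~(i).

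For part~(ii), the workhorse is an exponential tilting (Chernoff-type) estimate. On the event $\{\sigma:H_N(\sigma)/N>F'(\beta)+\varepsilon_N\}$, for any $\lambda>0$ the factor $\exp\bigl(\lambda(H_N(\sigma)-N(F'(\beta)+\varepsilon_N))\bigr)\geq 1$; inserting it into the Gibbs integral and relaxing the indicator produces
$$G_{N,\beta}\Bigl\{\frac{H_N}{N}>F'(\beta)+\varepsilon_N\Bigr\}\leq \frac{Z_N(\beta+\lambda)}{Z_N(\beta)}\,e^{-\lambda N(F'(\beta)+\varepsilon_N)}=e^{\,N\lambda\,[\delta_N^+(\beta,\lambda)-\varepsilon_N]}.$$
Setting $\lambda=N^{-c}$ and invoking $\varepsilon_N\geq \delta_N^+(\beta,N^{-c})+N^{-c'}$ (built into the definition of $\varepsilon_N$), this right-hand side is at most $e^{-N^{1-(c+c')}}$. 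Tilting with $-\lambda$ symmetrically handles the lower tail, and adding the two bounds proves~(ii).

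For part~(iii), the identity
$$\frac{1}{N}\log\int_{\Sigma_N}\indi_{C_N}(\sigma)\,e^{\beta H_N(\sigma)}\nu_N(d\sigma)-F_N(\beta)=\frac{1}{N}\log\bigl(1-G_{N,\beta}(C_N^c)\bigr)$$
reduces the claim, via part~(ii), to an elementary bound of the form $-\log(1-x)\leq 2x$ valid once $x\leq 1/2$; the prescribed lower bound on $N$ is precisely what guarantees that $2e^{-N^{1-(c+c')}}$ lies in the admissible range, so this quantitative inequality delivers the stated estimate.

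The step that will require the most care is part~(i): the hypothesis \eqref{ass1} supplies convergence only for each individually fixed $\beta'$, whereas we must evaluate $F_N$ at the moving point $\beta+N^{-c}$. Convexity is what rescues us, since it permits sandwiching the moving-step secant between secants of fixed step, decoupling the limit in $N$ from the vanishing of the increment.
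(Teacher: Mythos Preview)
Your proposal is correct and follows essentially the same route as the paper: part~(i) via convexity-based sandwiching of the moving secant between fixed-step secants and then invoking \eqref{ass1} and differentiability of $F$; part~(ii) via the exponential tilting (Chernoff) bound with $\lambda=N^{-c}$; and part~(iii) via the identity $N^{-1}\log G_{N,\beta}(C_N)$ together with~(ii). The only cosmetic difference is that the paper first passes through Proposition~\ref{thm0} to work on a single full-measure event on which $F_N(\beta')\to F(\beta')$ for all $\beta'$, whereas you use convergence at countably many fixed $h$'s directly---both are valid.
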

    	
    	Three remarks are in position.
    	
    	\begin{remark}\label{rmk1}\rm	
Items $(i)$ and $(iii)$ say that in the computation of the limiting free energy, only the configurations associated to energies near the energy level $F'(\beta)$ with an error estimate $\varepsilon_N$ would have nontrivial contributions. 
%In physics, the map between temperature parameter $\beta$ and energy level $F'(\beta)$ provided in Theorem \ref{thm9} is referred as the energy-temperature relation in the thermodynamic equivalence of the canonical and microcanonical ensembles. For more in this direction, see \cite[Section III]{Touchette} and the references therein.
As an immediate consequence of $(iii)$, one can compute the entropy  of the configurations at the energy level $F'(\beta)$ by
    		\begin{align}
    		\lim_{N\rightarrow\infty}\frac{1}{N}\log \nu_N(C_N)=F(\beta)-\beta F'(\beta).
    		\label{eq:entropy}
    		\end{align}  
    		\end{remark}
    		
    		\begin{remark}\label{rmk2}
    			\rm From \eqref{eq:entropy} and $(ii)$, we can deduce that the conditional Gibbs measure on $C_N$ at temperature $\beta$ is equivalent, at exponential scale, to the probability measure $\nu_N$ conditioning on $C_N,$ that is, for any sequence of sets $(A_N)$ with $A_N\subset\Sigma_N$  and $A_N\cap C_N\neq \emptyset$ for all $N\geq 1$,  
    			\begin{align*}
    			\lim_{N\rightarrow\infty}\frac{1}{N}\log \frac{G_{N,\beta}(A_N|C_N)}{\nu_N(A_N|C_N)}=0 \quad a.s.,
    			\end{align*}
    			where $G_{N,\beta}(A|C)$ and $\nu_N(A|C)$ are the conditional probabilities of event $A$ given $C$ with respect to $G_{N,\beta}$ and $\nu_N,$ respectively. Indeed, this can be obtained by 
    			\begin{align*}
    			\frac{1}{N}\log G_{N,\beta}(A_N| C_N)&=\frac{1}{N}\log G_{N,\beta}(A_N\cap C_N)+o(1)\\
    			&=\frac{1}{N}\log \nu_N(A_N\cap C_N)+\beta F'(\beta)-\frac{1}{N}\log Z_N+o(1)\\
    			&=\frac{1}{N}\log \nu_N(A_N\cap C_N)+\beta F'(\beta)-F(\beta)+o(1)\\
    			&=\frac{1}{N}\log \nu_N(A_N\cap C_N)-\frac{1}{N}\log \nu_N(C_N)+o(1)\\
    			&=\frac{1}{N}\log \nu_N(A_N| C_N)+o(1),
    			\end{align*}
    			where the first equality used $(ii)$ and $o(1)$ is a function (that may change from line to line) such that $\lim_{N\rightarrow \infty}o(1)=0.$
    		\end{remark}
    		
    			\begin{remark}
    				\label{rm3}
    				\rm 	Naturally, one would wonder what will happen if $F$ is not differentiable at $\beta.$ In this case, it can be shown, following the argument of Theorem \ref{thm9}, that the Hamiltonian stays inside the interval $(D_-F(\beta)-\varepsilon_N',D_+F(\beta)+\varepsilon_N')$ under the free energy, where $D_{\pm}F(\beta)$ are the right and left derivatives of $F$  and the quantity $\varepsilon_N'$ converges to zero. It would be of great interest to construct examples with their Hamiltonians being supported at more than one point in the interval $(D_-F(\beta),D_+F(\beta)).$ 
    			\end{remark}

    		Note that a classical result in convex analysis \cite{Ro} states that the derivative of a differentiable convex function on an open interval must be continuous on that interval. Thus, if one  further assumes that $F$ is differentiable on an open interval $J\subseteq (\beta-\delta,\beta+\delta)$, then $F'$ is automatically a continuous function on $J$.  Conversely, the following theorem states that if there exists a continuous function $E$ on some open interval $J\subset(\beta-\delta,\beta+\delta)$ such that the Hamiltonian is concentrated around $E$ under the free energy for all temperature in $J$, then  $F$ must be differentiable in $J$. 
    		\begin{theorem}\label{prop2}
    			If there exists an open interval $J\subseteq(\beta-\delta,\beta+\delta)$ and a continuous function ${E}:J\to \mathbb{R}$ such that for any $\beta\in J$ and $\varepsilon>0,$
    			\begin{align*}
    			\lim_{N\rightarrow\infty}\frac{1}{N}\log \int_{\Sigma_N}\indi_{\bigl\{\bigl|\frac{H_N(\sigma)}{N}-E(\beta)\bigr|\leq \varepsilon\bigr\}}(\sigma)\exp\big(\beta H_N(\beta)\big)\nu_N(d\sigma)=F(\beta), \, \, a.s.
    			\end{align*}
    			Then $F$ is differentiable on $J$ and $E=F'$ on $J.$
    		\end{theorem}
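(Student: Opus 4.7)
The plan is to show that both the right and left derivatives of $F$ at every $\beta\in J$ equal $E(\beta)$; the convexity of $F$ (noted in the introduction) then forces differentiability with $F'(\beta)=E(\beta)$. Fixing $\beta\in J$ and $h>0$ small enough that $\beta+h\in J$, the goal is to sandwich the forward difference quotient as
$$E(\beta)-\varepsilon\;\leq\;\frac{F(\beta+h)-F(\beta)}{h}\;\leq\;E(\beta+h)+\varepsilon$$
for every $\varepsilon>0$, then send $\varepsilon\to 0^+$ and $h\to 0^+$ and invoke the continuity of $E$ at $\beta$ to pinch.

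For the lower bound, I would restrict the partition function $Z_N(\beta+h)$ to the concentration set $C_N^\beta(\varepsilon):=\{\sigma:|H_N(\sigma)/N-E(\beta)|\leq\varepsilon\}$ supplied by the hypothesis at temperature $\beta$:
$$Z_N(\beta+h)\;\geq\;\int_{C_N^\beta(\varepsilon)} e^{h H_N(\sigma)}e^{\beta H_N(\sigma)}\,\nu_N(d\sigma)\;\geq\;e^{hN(E(\beta)-\varepsilon)}\int_{C_N^\beta(\varepsilon)} e^{\beta H_N(\sigma)}\,\nu_N(d\sigma).$$
Applying $\frac{1}{N}\log$, the hypothesis of the theorem at $\beta$, and \eqref{ass1} at $\beta+h$ yields $F(\beta+h)\geq h(E(\beta)-\varepsilon)+F(\beta)$ almost surely. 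The upper bound is obtained by the symmetric operation at temperature $\beta+h$: restrict $Z_N(\beta)$ to $C_N^{\beta+h}(\varepsilon)$, so that
$$Z_N(\beta)\;\geq\;\int_{C_N^{\beta+h}(\varepsilon)} e^{-h H_N(\sigma)}e^{(\beta+h)H_N(\sigma)}\,\nu_N(d\sigma)\;\geq\;e^{-hN(E(\beta+h)+\varepsilon)}\int_{C_N^{\beta+h}(\varepsilon)} e^{(\beta+h)H_N(\sigma)}\,\nu_N(d\sigma),$$
giving $F(\beta)\geq -h(E(\beta+h)+\varepsilon)+F(\beta+h)$. The left derivative is handled by the parallel construction with $\beta-h$ in place of $\beta+h$ and opposite sign conventions.

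The main, and essentially only, conceptual ingredient beyond these monotone manipulations is the continuity of $E$: without it one would only recover the bracketing $D_-F(\beta)\leq E(\beta)\leq D_+F(\beta)$ with no mechanism to force these to coincide. Morally the argument is the converse of Theorem \ref{thm9}: there, differentiability of $F$ was used to produce concentration of the Hamiltonian, and here concentration (plus continuity of $E$) is used to recover differentiability. The exceptional null sets from \eqref{ass1} and from the hypothesis are routinely absorbed by intersecting over rational $\varepsilon$, $h$, and a dense set of $\beta\in J$, so that the pointwise estimates above hold for every $\beta\in J$ on a single almost sure event.
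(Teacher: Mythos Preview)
Your proposal is correct and is essentially the same argument as the paper's: both restrict the partition function at one temperature to the concentration set, shift temperature, and extract the factor $e^{\pm hN(E\pm\varepsilon)}$ to sandwich the difference quotient, then invoke continuity of $E$ to close the gap. The only cosmetic difference is that the paper uses continuity of $E$ up front (to pass from the set $\{|H_N/N-E(\beta)|\le\varepsilon\}$ to $\{|H_N/N-E(\beta')|\le2\varepsilon\}$) and thereby gets both sides of the sandwich centered at $E(\beta')$, whereas you defer continuity to the final limit $h\to0$; the content is identical.
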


    		We now turn our attention to the concentration of the Hamiltonian under the Gibbs average. This property has been derived under various assumptions and plays an essential role in statistical mechanics. Our main result below establishes concentration of the Hamiltonian under the Gibbs measure a.s.
   		
    		\begin{theorem}\label{prop3}
    			Assume that \eqref{ass1} holds and $F$ is differentiable at $\beta.$ Then
    			\begin{align*}
    			\lim_{N\rightarrow\infty}\Bigl\la\Bigl|\frac{H_N(\sigma)}{N}-F'(\beta)\Bigr|\Bigr\ra_\beta=0,\,\,a.s.
    			\end{align*}	
    		\end{theorem}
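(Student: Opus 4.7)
My plan is to reduce Theorem \ref{prop3} to the tail estimates already established in Theorem \ref{thm9} via a simple truncation. I would decompose
\begin{align*}
\Bigl\la \Bigl|\frac{H_N(\sigma)}{N} - F'(\beta)\Bigr|\Bigr\ra_\beta
&= \Bigl\la \Bigl|\frac{H_N}{N} - F'(\beta)\Bigr|\indi_{C_N}\Bigr\ra_\beta + \Bigl\la \Bigl|\frac{H_N}{N} - F'(\beta)\Bigr|\indi_{C_N^c}\Bigr\ra_\beta.
\end{align*}
On $C_N$ the integrand is pointwise bounded by $\varepsilon_N$, so the first summand is at most $\varepsilon_N$, which converges to $0$ almost surely by Theorem \ref{thm9}(i). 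The whole problem therefore reduces to showing that the $C_N^c$ contribution vanishes a.s.\ as well.

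For the $C_N^c$ piece I would split according to the sign of $H_N/N - F'(\beta)$ and apply the layer-cake identity. For the upper side this reads
\begin{align*}
\Bigl\la \Bigl(\frac{H_N}{N}-F'(\beta)\Bigr)\indi_{\{H_N/N > F'(\beta)+\varepsilon_N\}}\Bigr\ra_\beta
&= \varepsilon_N\, G_{N,\beta}\bigl(H_N/N > F'(\beta)+\varepsilon_N\bigr) \\
&\quad + \int_0^\infty G_{N,\beta}\bigl(H_N/N > F'(\beta)+\varepsilon_N+s\bigr)\,ds,
\end{align*}
and similarly for the lower side. The essential tool is the tilting identity $\la e^{\lambda H_N}\ra_\beta = Z_N(\beta+\lambda)/Z_N(\beta) = \exp\bigl(N(F_N(\beta+\lambda)-F_N(\beta))\bigr)$. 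Taking $\lambda=N^{-c}$ and reusing the Chernoff step behind Theorem \ref{thm9}(ii), one obtains the pathwise bound
\begin{align*}
G_{N,\beta}\bigl(H_N/N > F'(\beta)+\varepsilon_N+s\bigr) \leq \exp\bigl(-N^{1-(c+c')}-N^{1-c}s\bigr),\quad s\ge 0.
\end{align*}
Integrating in $s$ contributes a term of order $N^{c-1}e^{-N^{1-(c+c')}}$, while the boundary term is at most $\varepsilon_N\, e^{-N^{1-(c+c')}}$; both tend to zero a.s. The lower-side contribution is treated symmetrically using $\la e^{-\lambda H_N}\ra_\beta$ and $\delta_N^-(\beta,N^{-c})$.

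The main point requiring care is that the bound $G_{N,\beta}(C_N^c)\le 2e^{-N^{1-(c+c')}}$ from Theorem \ref{thm9}(ii) alone is insufficient: it controls the Gibbs mass of $C_N^c$ but not the unbounded integrand $|H_N/N-F'(\beta)|$ on that set, which could in principle inflate the average. The remedy, as sketched above, is to feed the tilted moment with the same scale $\lambda=N^{-c}$ back into Markov's inequality, producing a tail that decays exponentially in $s$ and hence makes the layer-cake integral absolutely summable. Combining the two sides and invoking Theorem \ref{thm9}(i) to send $\varepsilon_N\to 0$ completes the argument.
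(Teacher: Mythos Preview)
Your argument is correct and rests on the same two ingredients as the paper's proof: the exponential tilting (Chernoff) bound $G_{N,\beta}\bigl(A_N^{\pm}(\varepsilon)\bigr)\le e^{-N\lambda(\varepsilon-\gamma_N(\beta,\lambda))}$ and the layer-cake representation of the $L^1$ norm. The difference is organizational. You first split off the bulk $C_N$, recycle the specific scale $\lambda=N^{-c}$ from Theorem~\ref{thm9}, and integrate the resulting tail $e^{-N^{1-(c+c')}-N^{1-c}s}$ over $s\ge 0$; everything then follows from Theorem~\ref{thm9}(i). The paper instead skips the truncation and integrates the tail bound in $\varepsilon$ from $0$ with a free parameter $\lambda$, obtaining $\bigl\langle(H_N/N-F'(\beta))_\pm\bigr\rangle_\beta\le (N\lambda)^{-1}e^{N\lambda\gamma_N(\beta,\lambda)}$; it then chooses $\lambda$ adaptively (either a fixed $\lambda_0$ or $1/(N\gamma_N(\beta,\lambda_0))$, whichever keeps the exponent bounded) and finishes by sending $\lambda_0\downarrow 0$ after $N\to\infty$. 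Your route is slightly more concrete and leans directly on Theorem~\ref{thm9}; the paper's route is a bit more self-contained and does not require fixing the exponents $c,c'$ in advance. Both arrive at the same conclusion with comparable effort.
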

    		
    		A similar result was obtained in Panchenko \cite{P10}, where he presented an approach, different from the one adopted in this paper, to establish
    		\begin{align}\label{eq1}
    		\lim_{N\rightarrow\infty}\e\Bigl\la\Bigl|\frac{H_N(\sigma)}{N}-\e\Bigl\la\frac{H_N(\sigma)}{N}\Bigr\ra_\beta\Bigr|\Bigr\ra_\beta=0
    		\end{align}
    		under the assumption that $\lim_{N\rightarrow\infty}\e |F_N-\e F_N|=0$, $\lim_{N\rightarrow\infty}\e F_N=F$ in $(\beta-\delta,\beta+\delta)$, and $F$ is differentiable at $\beta.$ 
    		Note that Theorem \ref{prop3} readily implies \eqref{eq1}. Although the assumption \eqref{ass1} in Theorem \ref{prop3} is stronger than Panchenko's setting described above, our argument of Theorem \ref{prop3} indeed allows to obtain the same statement as Panchenko \cite{P10}. In what follows, we give an example to demonstrate an application of \eqref{eq1} in the study of mean-field spin glasses \cite{MPV}.

       		\smallskip
       		
       		\begin{example}[Sherrington-Kirkpatrick model]\label{ex1}
       			\rm For any $N\geq 1,$ consider the hypercube $\Sigma_N=\{-1,+1\}^N$ and let $\nu_N$ be the uniform probability measure on $\Sigma_N.$ The Sherrington-Kirkpatrick (SK) mean-field spin glass model is defined on $\Sigma_N$ with Hamiltonian,
       			\begin{align*}
       			H_N(\sigma)=\frac{1}{\sqrt{N}}\sum_{i,j=1}^Ng_{ij}\sigma_{i}\sigma_{j},
       			\end{align*}
       			where $g_{ij}$'s are i.i.d. standard normal for all $1\leq i,j\leq N$. It can be easily computed that the covariance of $H_N$ equals
       			\begin{align*}
       			\e H_N(\sigma)H_N(\sigma')=N\Bigl(\frac{1}{N}\sum_{i=1}^N\sigma_i\sigma_i'\Bigr)^2
       			\end{align*}
       			for any $\sigma,\sigma'\in \Sigma_N.$ As the covariance of $H_N$ is of order $N$, the quantity $1/N$ in the definition of the free energy is the right scaling factor. In Guerra-Toninelli \cite{GT}, it was known that the limiting free energy of the SK model converges a.s. and, by the virtue of Gaussian concentration of measure, this limit is a nonrandom function $F$ of $\beta$. Furthermore, it was later established by Talagrand \cite{Tal03} that $F$ admits a variational principle conjectured by Parisi \cite{Par}. One of the important consequences of Parisi's formula guarantees that $F$ is differentiable for all $\beta>0$ (see, e.g., \cite{T11,P13}), so the conclusions of Theorems \ref{thm9} and \ref{prop3} as well as \eqref{eq1} hold.  
       			
       			Denote by $(\sigma^\ell)_{\ell\geq 1}$ i.i.d. samplings from the Gibbs measure and by $\la\cdot\ra_\beta$ the Gibbs expectation with respect to this sequence. Set $R_{\ell,\ell'}=N^{-1}\sum_{i=1}^N\sigma_i^\ell\sigma_i^{\ell'}$ the overlap between $\sigma^\ell,\sigma^{\ell'}$, which measures the degree of similarities between the two configurations. 	
       			Let $n\geq 2$ be fixed. Assume that $\phi$ is a bounded function of the overlaps $(R_{\ell,\ell'})_{1\leq \ell\neq \ell'\leq n}$. Applying \eqref{eq1} yields
       			\begin{align}\label{eq3}
       			\Bigl|\e\Bigl\la\phi\frac{H_N(\sigma^1)}{N}\Bigr\ra_\beta-\e \la\phi\ra_\beta\e\Bigl\la\frac{H_N(\sigma^1)}{N}\Bigr\ra_\beta\Bigr|\leq \|\phi\|_\infty \e\Bigl\la\Bigl|\frac{H_N(\sigma^1)}{N}-\e\Bigl\la\frac{H_N(\sigma^1)}{N}\Bigr\ra_\beta\Bigr|\Bigr\ra_{\beta}\rightarrow 0.
       			\end{align}
       			Recall the Gaussian integration by parts states that if $z$ is a standard normal random variable, then $\e zf(z)=\e f'(z)$ for any absolutely continuous function $f$ with moderate growth. Utilizing this formula, we compute
       			\begin{align*}
       			\e \Bigl\la\phi \frac{H_N(\sigma^1)}{N}\Bigr\ra_\beta&=\e \Bigl\la\phi \sum_{\ell=1}^{n}R_{1,\ell}^2\Bigr\ra_\beta-n\e\bigl\la \phi R_{1,n+1}^2\bigr\ra_\beta,\\
       			 \e \Bigl\la\frac{H_N(\sigma^1)}{N} \Bigr\ra_\beta&=\e \la R_{1,1}^2\ra_\beta-\e\la R_{1,2}^2\ra_\beta.
       			\end{align*}
       		    Plugging these two equations into \eqref{eq3} leads to 
       		    \begin{align}\label{ggi}
       		    \lim_{N\rightarrow\infty}\Bigl(\e \la \phi R_{1,{n+1}}^2\ra_\beta-\frac{1}{n}\e\la \phi\ra_\beta\e\la R_{1,2}^2\ra_\beta-\frac{1}{n}\sum_{\ell=2}^{n}\e \la \phi R_{1,\ell}^2\ra_\beta\Bigr)=0.
       		    \end{align}
       		    This is called the Ghirlanda-Guerra identity \cite{G98} for the SK model. By adding asymptotically vanishing perturbations, one can actually derive the extended Ghirlanda-Guerra identities, where \eqref{ggi} is not only valid  for the second moment of the overlap, but also for any higher moments. These identities contain vital information about the Gibbs measure and ultimately connect to the computation of the limiting free energy. See, for instance, Panchenko \cite{P13} and Talagrand \cite{T11}. We also invite the readers to  check some examples of quenched self-averaging of the Hamiltonian in Auffinger-Chen \cite{AC14}, Chatterjee \cite{C14}, and Chen-Panchenko \cite{CP}.
       		\end{example}
    	
	{\noindent \bf Acknowledgements.} The research of A. A. is partly supported by NSF Grant CAREER DMS-1653552 and NSF Grant DMS-1517894. The research of W.-K. C. is partly supported by NSF Grant DMS-1642207 and Hong Kong Research Grants Council GRF-14302515. Both authors thank the anonymous referees for the careful reading and many valuable suggestions regarding the presentation of the paper. W.-K. C. thanks the hospitality of the institute of mathematics at Academia Sinica and the department of applied mathematics at National Sun Yat-Sen University during his visit in June 2017, where part of the writing of the present paper was completed.

    		\section{Proofs of Theorems \ref{thm9} and \ref{prop2} and Proposition \ref{prop3}}\label{sub4.3}
    	
    		The idea of our proof for Theorem \ref{thm9} is motivated by the standard derivation of the large deviation principle. The added technicality comes from the fact that $(F_N)_{N\geq 1}$ is a sequence of random Laplace transforms rather than deterministic ones. This difficulty will be overcome by applying the almost surely pointwise convergence of the sequence $(F_N)_{N\geq 1}$ stated in the following proposition.  

    	\begin{proposition}\label{thm0}
    		Let $F$ be a continuous function defined on an interval $I \subseteq \mathbb R$. If $$\lim_{N\rightarrow\infty}F_N(\beta)=F(\beta),\,\,a.s.$$
    		for any $\beta$ in a dense subset of $I$, then 
    		\begin{align*}
    		%\label{thm0:eq0}
    		\p\Bigl(\lim_{N\rightarrow\infty}F_N(\beta)=F(\beta),\,\,\forall \beta \in I \Bigr)=1.
    		\end{align*} 
    	\end{proposition}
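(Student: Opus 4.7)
The plan is to upgrade the pointwise a.s.\ convergence on a dense subset to an everywhere a.s.\ convergence by exploiting the convexity of the random functions $F_N$, which was noted earlier as a consequence of H\"{o}lder's inequality.

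First, I would fix a countable dense subset $D\subseteq I$ (possible since $\mathbb{R}$ is separable, and one may intersect any given dense subset with the rationals on which the hypothesis still applies via a further countable dense refinement). Because countable intersections of probability-one events have probability one, there is an event $\Omega_0$ with $\p(\Omega_0)=1$ on which $F_N(\beta)\to F(\beta)$ for every $\beta\in D$ simultaneously. The remainder of the argument is then deterministic: I would work on a fixed realization in $\Omega_0$ and show that $F_N(\beta)\to F(\beta)$ for every $\beta\in I$.

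Next, I would use the convexity of $F_N$ to sandwich $F_N(\beta)$ at an arbitrary interior point $\beta\in I$. Choosing four rationals in $D$ with $\beta_0<\beta_1<\beta<\beta_2$, convexity (applied to the triples $(\beta_1,\beta,\beta_2)$ and $(\beta_0,\beta_1,\beta)$) yields
\begin{align*}
F_N(\beta)&\leq \frac{\beta_2-\beta}{\beta_2-\beta_1}F_N(\beta_1)+\frac{\beta-\beta_1}{\beta_2-\beta_1}F_N(\beta_2),\\
F_N(\beta)&\geq \frac{\beta-\beta_0}{\beta_1-\beta_0}F_N(\beta_1)-\frac{\beta-\beta_1}{\beta_1-\beta_0}F_N(\beta_0).
\end{align*}
On $\Omega_0$, letting $N\to\infty$ replaces each $F_N(\beta_i)$ by $F(\beta_i)$, giving upper and lower bounds on $\limsup_N F_N(\beta)$ and $\liminf_N F_N(\beta)$ in terms of values of $F$ at nearby points of $D$. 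Then sending $\beta_0,\beta_1\uparrow\beta$ and $\beta_2\downarrow \beta$ along $D$ (possible since $D$ is dense), the continuity of $F$ collapses both bounds to $F(\beta)$, proving $\lim_N F_N(\beta)=F(\beta)$ for every interior $\beta\in I$.

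The main subtlety—and the only place some care is needed—is what happens if $I$ contains an endpoint, since the sandwich argument requires dense points on both sides of $\beta$. If $I$ has a right endpoint $\beta^*\in I$, I would handle this by choosing $\beta_0<\beta_1<\beta^*$ in $D$ to obtain the lower bound as above, and for the upper bound exploit convexity to control $F_N(\beta^*)$ in terms of $F_N(\beta_1)$ together with a slope bound; one convenient way is to use the fact that secant slopes of convex functions are monotone, so $F_N(\beta^*)-F_N(\beta_1)\leq (\beta^*-\beta_1)\cdot$(secant slope on any interval to the right, if available) or, when no such interval is available in $I$, to simply enlarge slightly by continuity of $F$ at $\beta^*$. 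This endpoint bookkeeping is the only technical obstacle; the convex-sandwich at interior points is essentially automatic once the countable intersection argument sets up a.s.\ convergence on $D$.
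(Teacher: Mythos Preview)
Your approach is correct and matches the paper's: reduce to a countable dense $D\subset I$, intersect the probability-one convergence events, and then argue deterministically from the convexity of each $F_N$ together with the continuity of $F$; the paper packages this deterministic step as a separate lemma (convexity $\Rightarrow$ uniform Lipschitz on compact subintervals $\Rightarrow$ convergence via a triangle inequality), while you sandwich $F_N(\beta)$ directly between affine combinations of values at nearby points of $D$, an equivalent one-line use of convexity. Your endpoint worry is unnecessary---the paper's lemma is stated only for open $I$, which is all that is ever used---and in fact the conclusion can genuinely fail at an included endpoint without extra hypotheses (e.g.\ $F_N(x)=N\max(0,x-1+1/N)$ on $[0,1]$ with $F\equiv 0$), so that part of your sketch should simply be dropped rather than patched.
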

	To prove Proposition \ref{thm0}, we first need a lemma:
	\begin{lemma}\label{lem0}
    		Let $I\subset \mathbb{R}$ be an open interval. Suppose that $\{f_n\}$ is a sequence of convex functions on $I$ and $f$ is a real-valued function on $I.$ If $\lim_{n\rightarrow\infty}f_n(y)=f(y)$ pointwise on a dense subset $D\subset I$ and $f$ is continuous at $y_0\in \mathbb{R},$ then $\lim_{n\rightarrow\infty}f_n(y_0)=f(y_0)$.
    	\end{lemma}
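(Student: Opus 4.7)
The plan is to exploit the convexity of each $f_n$ via the standard slope/three-point inequality in order to sandwich $f_n(y_0)$ between quantities involving $f_n$ only at points of $D$, where the pointwise convergence hypothesis applies. Density of $D$ in $I$ will let us pick such sandwiching points as close to $y_0$ as we like, and continuity of $f$ at $y_0$ will collapse the sandwich to $f(y_0)$. The case $y_0 \in D$ is immediate from the hypothesis, so I may assume $y_0 \notin D$.

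For the upper bound, I would fix $a,b \in D$ with $a < y_0 < b$ and write $y_0$ as a convex combination of $a$ and $b$. Convexity of $f_n$ gives
\[
f_n(y_0) \leq \frac{b-y_0}{b-a} f_n(a) + \frac{y_0-a}{b-a} f_n(b).
\]
Letting $n \to \infty$ with $a,b$ fixed, and invoking the hypothesis at the two points of $D$, yields
\[
\limsup_{n\to\infty} f_n(y_0) \leq \frac{b-y_0}{b-a} f(a) + \frac{y_0-a}{b-a} f(b).
\]
Now I would choose sequences $a_k \nearrow y_0$ and $b_k \searrow y_0$ within $D$ (possible by density of $D$ in the open interval $I$) and use continuity of $f$ at $y_0$ to conclude that the right-hand side tends to $f(y_0)$, whence $\limsup_n f_n(y_0) \leq f(y_0)$.

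For the lower bound, I would fix $c,a \in D$ with $c < a < y_0$ and this time express the intermediate point $a$ as a convex combination of $c$ and $y_0$, namely $a = \frac{y_0-a}{y_0-c}\, c + \frac{a-c}{y_0-c}\, y_0$. Convexity of $f_n$ gives
\[
f_n(a) \leq \frac{y_0-a}{y_0-c} f_n(c) + \frac{a-c}{y_0-c} f_n(y_0),
\]
which rearranges to
\[
f_n(y_0) \geq \frac{y_0-c}{a-c}\, f_n(a) - \frac{y_0-a}{a-c}\, f_n(c).
\]
Taking $n \to \infty$ with $c,a$ fixed (so the hypothesis applies at both), and then letting $a \to y_0^-$ within $D$ while keeping $c$ fixed, the coefficient $(y_0-a)/(a-c)$ of $f_n(c)$ vanishes, $(y_0-c)/(a-c) \to 1$, and by continuity of $f$ at $y_0$ the term $f(a) \to f(y_0)$. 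This produces $\liminf_n f_n(y_0) \geq f(y_0)$.

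The argument is essentially routine, so there is no serious obstacle; the one point requiring care is the order of limits. One must send $n \to \infty$ first, with the sandwiching points $a, b, c \in D$ held fixed, in order to apply the pointwise convergence hypothesis on $D$; only afterwards may one send those points to $y_0$ and invoke continuity of $f$ at $y_0$. No information about the values of $f$ outside a neighbourhood of $y_0$, and no regularity of $f_n$ at $y_0$ beyond convexity, is needed.
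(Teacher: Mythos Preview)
Your argument is correct, but it proceeds differently from the paper. The paper picks four points $a<b<y_0<a'<b'$ in $D$ and uses the monotone-slope inequality for convex functions to show that the family $(f_n)$ is uniformly Lipschitz on $[b,a']$ with a common constant $M$; a single triangle inequality $|f_n(y_0)-f(y_0)|\le M|y_0-y|+|f_n(y)-f(y)|+|f(y)-f(y_0)|$ with $y\in D$ then handles both the $\limsup$ and $\liminf$ at once. You instead treat the two directions separately: the $\limsup$ via the defining convexity inequality at a pair $a<y_0<b$ in $D$, and the $\liminf$ by extrapolating from the three-point inequality with $c<a<y_0$ in $D$. Your route is a bit more direct and needs fewer auxiliary points, while the paper's uniform-Lipschitz step yields equicontinuity of $(f_n)$ on compact subintervals as a byproduct (hence local uniform convergence, not just pointwise). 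Either way the essential ingredients---density of $D$, continuity of $f$ at $y_0$, and first passing $n\to\infty$ with the auxiliary points held fixed---are the same, and you handle the order of limits correctly.
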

    	
     	\begin{proof}
     		Let $y_0\in \mathbb{R}$ be fixed. Suppose that $f$ is continuous at $y_0.$ Choose points $a,b,a',b'\in D$ with $a<b<y_0<a'<b'$. By the convexity of $f_n,$ for any $b<x<y<a'$,
     		\begin{align*}
     		\frac{f_n(b)-f_n(a)}{b-a}\leq \frac{f_n(y)-f_n(x)}{y-x}\leq \frac{f_n(b')-f_n(a')}{b'-a'}.
     		\end{align*}
     		Since $\{f_n\}$ converges to $f$ at $a,b,a',b',$ this inequality means that $\{f_n\}$ is uniform Lipschitz on $[b,a']$ for all $n\geq 1$ with Lipschitz constant $M>0.$ Consequently, for any $y\in D\cap [b,a']$,
    		\begin{align*}
    		|f_n(y_0)-f(y_0)|&\leq |f_n(y_0)-f_n(y)|+|f_n(y)-f(y)|+|f(y)-f(y_0)|\\
    		&\leq M|y_0-y|+|f_n(y)-f(y)|+|f(y)-f(y_0)|
    		\end{align*}
    		and passing to limit gives
    		\begin{align*}
    		\limsup_{n\rightarrow\infty}|f_n(y_0)-f(y_0)|\leq M|y_0-y|+|f(y)-f(y_0)|.
    		\end{align*}
    		Since this holds for any $y\in D\cap [b,a']$, letting $y\in D\rightarrow y_0$ finishes our proof.
    	\end{proof}

\begin{proof}[\bf Proof of Proposition \ref{thm0}]
	Pick a countable dense subset $D$ of $I$. Denote by $\Omega({\beta})$ the event that $(F_N(\beta))$ converges. Let $\Omega=\cap_{\beta\in D}\Omega(\beta).$ Note that $\p(\Omega)=1.$ Therefore, on $\Omega,$ $\lim_{N\rightarrow\infty}F_{N}(\beta)=F(\beta)$ for any $\beta\in D.$ Since $D$ is dense and $F$ is continuous everywhere, Lemma \ref{lem0}  implies that $\lim_{N\rightarrow\infty}F_N=F$ on pointwise on $I$ with probability one.
\end{proof}

    		\begin{proof}[\bf Proof of Theorem \ref{thm9}]
    			Let $\lambda_N=N^{-c}.$ To verify $(i)$, suppose that we are on the event,
    			\begin{align*}
    			\bigl\{\lim_{N\rightarrow\infty}F_N(\beta')=F(\beta'),\,\,\forall \beta'\in(\beta-\delta,\beta+\delta)\bigr\}.
    			\end{align*}
    			From the convexity of $F_N$, for any $0<\eta<\delta/2$, as long as $N$ is large enough, we have
    			\begin{align*}
    			\frac{F_N(\beta)-F_N(\beta-\eta)}{\eta}&\leq \frac{F_N(\beta)-F_N(\beta-\lambda_N)}{\lambda_N}\\
    			&\leq \frac{F_N(\beta+\lambda_N)-F_N(\beta)}{\lambda_N}\leq \frac{F_N(\beta+\eta)-F_N(\beta)}{\eta}.
    			\end{align*}
    			Here since the left and right sides converge to 
    			\begin{align*}
    			\frac{F(\beta)-F(\beta-\eta)}{\eta}\,\,\mbox{and}\,\,\frac{F(\beta+\eta)-F(\beta)}{\eta},
    			\end{align*}
    			the differentiability of $F$ shows that these two quantities are equal to each other as $\eta\downarrow 0.$ To sum up,
    			\begin{align}\label{proof:eq1}
    			\lim_{N\rightarrow\infty}\frac{F_N(\beta+\lambda_N)-F_N(\beta)}{\lambda_N}=\lim_{N\rightarrow\infty}\frac{F_N(\beta)-F_N(\beta-\lambda_N)}{\lambda_N}=F'(\beta).
    			\end{align}
    			Note that since $F_N$ is convex, $F$ is continuous on $(\beta-\delta,\beta+\delta).$ From Proposition \ref{thm0} and \eqref{proof:eq1}, $(i)$ follows.
    			
    			The proof of $(ii)$ and $(iii)$ given below is the main novelty of the paper. For any $\varepsilon>0,$ define 
       			\begin{align*}
    			B_N^+(\beta,\varepsilon)&=\frac{1}{N}\log \int_{\Sigma_N}\indi_{A_N^+(\varepsilon)}(\sigma)\exp\bigl(\beta H_N(\sigma)\bigr)\nu_N(d\sigma),\\
    			B_N^-(\beta,\varepsilon)&=\frac{1}{N}\log \int_{\Sigma_N}\indi_{A_N^-(\varepsilon)}(\sigma)\exp\bigl(\beta H_N(\sigma)\bigr)\nu_N(d\sigma),
    			\end{align*}
    			where
    				\begin{align*}
    				A_N^+(\varepsilon)&:=\Bigl\{\sigma\in \Sigma_N:\frac{H_N(\sigma)}{N}-F'(\beta)> \varepsilon\Bigr\},\\
    				A_N^-(\varepsilon)&:=\Bigl\{\sigma\in \Sigma_N:\frac{H_N(\sigma)}{N}-F'(\beta)<- \varepsilon\Bigr\}.
    				\end{align*}
    			Observe that for any $0<\lambda<\delta,$
    			\begin{align}\label{eq1.1}
    			\begin{split}
    			B_N^+(\beta,\varepsilon)&\leq \frac{1}{N}\log \int_{\Sigma_N}\exp\bigl((\beta+\lambda) H_N(\sigma)-\lambda N\bigl(F'(\beta) +\varepsilon\bigr)\bigr)\nu_N(d\sigma)\\
    			&=F_N(\beta+\lambda)-\lambda (\varepsilon+F'(\beta) )
    			\end{split}
    			\end{align}
    			and
    			\begin{align}\label{eq1.2}
    			\begin{split}
    			B_N^-(\beta,\varepsilon)&\leq \frac{1}{N}\log \int_{\Sigma_N}\exp\bigl((\beta -\lambda)H_N(\sigma)+\lambda N\bigl(F'(\beta) -\varepsilon\bigr)\bigr)\nu_N(d\sigma)\\
    			&=F_N(\beta-\lambda)-\lambda (\varepsilon-F'(\beta) ).
    			\end{split}
    			\end{align}
    			To control these inequalities, write
    			\begin{align}\label{eq-1}
    			F_N(\beta\pm\lambda)\mp \lambda F'(\beta) -\lambda \varepsilon&=F_N(\beta)-\lambda(\varepsilon-\delta_N^{\pm}(\beta,\lambda))\leq F_N(\beta)-\lambda(\varepsilon-\gamma_N(\beta,\lambda)),
    			\end{align}  
    			where
    			\begin{align*}
    			\gamma_N(\beta,\lambda)&:=\max\bigl(\delta_N^+(\beta,\lambda),\delta_N^-(\beta,\lambda),0\bigr),\,\,\forall 0<\lambda<\delta.
    			\end{align*}
    			From \eqref{eq1.1}, \eqref{eq1.2}, and \eqref{eq-1}, it follows that
    			\begin{align}
    			\begin{split}
    			\label{thm8:proof:eq1}
    			G_{N,\beta}\bigl(A_N^{\pm}(\varepsilon)\bigr)\leq e^{-N\lambda (\varepsilon-\gamma_N(\beta,\lambda))}.
    			\end{split}
    			\end{align}
    			Consequently, we have
    			\begin{align}\label{eq2}
    			G_{N,\beta}\bigl(A_N^+(\varepsilon)\cup A_N^-(\varepsilon)\bigr)\leq 2e^{-N\lambda (\varepsilon-\gamma_N(\beta, \lambda))}.
    			\end{align}
    			From now on, take $\lambda=\lambda_N$ and $\varepsilon=\varepsilon_N$. Using the bound 
    				\begin{align*}
    				e^{-N\lambda (\varepsilon-\gamma_N(\beta,\lambda))}&\leq e^{-N\cdot N^{-c}\cdot N^{-c'}}<e^{-N^{1-(c+c')}},
    				\end{align*}
    				the inequality \eqref{eq2} gives $(ii).$ On the other hand, observe that 
			\begin{align*}
    			 G_{N,\beta}(C_N) =1- G_{N,\beta}\bigl(A_N^+(\varepsilon)\cup A_N^-(\varepsilon)\bigr) \geq 1- 2e^{-N\lambda (\varepsilon-\gamma_N(\beta, \lambda))}>1-2e^{-N^{1-(c+c')}},
    			\end{align*}
    			from which taking $N^{-1}\log$ on both sides yields that 
    			\begin{align*}
    			%\begin{split}\label{eq0}
    			F_N(\beta)&\geq\frac{1}{N}\log \int_{\Sigma_N}\indi_{C_N}(\sigma)\exp\bigl({\beta H_N(\sigma)}\bigr)\nu_N(d\sigma)\\
    			&\geq F_N(\beta)+\frac{1}{N}\log \bigl(1-2e^{-N\lambda (\varepsilon-\gamma_N(\beta,\lambda))}\bigr)\\
    			&\geq F_N(\beta)+\frac{1}{N}\log \bigl(1-2e^{-N^{1-(c+c')}}\bigr).
    			%\end{split}
    			\end{align*}
    			This implies $(iii)$ and finishes our proof.
    		\end{proof}
    		
    		\begin{remark}
    			\rm From the above proof, under the same assumption and noting $F_N'(\beta)=N^{-1}\bigl\la H_N(\sigma)\bigr\ra_\beta$, one can derive by the same argument to obtain an identical statement as Theorem \ref{thm9} by replacing every $F'(\beta)$ by $F_N'(\beta)$. 
    		\end{remark}
    		
    		\begin{proof}[\bf  Proof of Theorem \ref{prop2}]
    			Let $\beta\in J$ be fixed. For $\varepsilon>0,$ observe that for any $\beta'<\beta$ and $\beta'$ being sufficiently close to $\beta,$ the continuity of $E$ gives
    			\begin{align*}
    			&\frac{1}{N}\log \int_{\Sigma_N}\indi_{\bigl\{\bigl|\frac{H_N(\sigma)}{N}-E(\beta)\bigr|\leq \varepsilon\bigr\}}(\sigma)e^{\beta H_N(\sigma)}\nu_N(d\sigma)\\
    			&\leq \frac{1}{N}\log \int_{\Sigma_N}\indi_{\bigl\{\bigl|\frac{H_N(\sigma)}{N}-E(\beta')\bigr|\leq 2\varepsilon\bigr\}}(\sigma)e^{\beta H_N(\sigma)}\nu_N(d\sigma)\\
    			&\leq \frac{1}{N}\log \int_{\Sigma_N}\indi_{\bigl\{\bigl|\frac{H_N(\sigma)}{N}-E(\beta')\bigr|\leq 2\varepsilon\bigr\}}(\sigma)e^{\beta' H_N(\sigma)}\nu_N(d\sigma)+(\beta-\beta')\bigl({E}(\beta')+2\varepsilon\bigr)
    			\end{align*}
    			and
    				\begin{align*}
    				&\frac{1}{N}\log \int_{\Sigma_N}\indi_{\bigl\{\bigl|\frac{H_N(\sigma)}{N}-E(\beta)\bigr|\leq 2\varepsilon\bigr\}}(\sigma)e^{\beta H_N(\sigma)}\nu_N(d\sigma)\\
    				&\geq \frac{1}{N}\log \int_{\Sigma_N}\indi_{\bigl\{\bigl|\frac{H_N(\sigma)}{N}-E(\beta')\bigr|\leq \varepsilon\bigr\}}(\sigma)e^{\beta H_N(\sigma)}\nu_N(d\sigma)\\
    				&\geq \frac{1}{N}\log \int_{\Sigma_N}\indi_{\bigl\{\bigl|\frac{H_N(\sigma)}{N}-E(\beta')\bigr|\leq \varepsilon\bigr\}}(\sigma)e^{\beta' H_N(\sigma)}\nu_N(d\sigma)+(\beta-\beta')\bigl({E}(\beta')-\varepsilon\bigr).
    				\end{align*}
    			By letting $N\rightarrow \infty$, the given assumption leads to
    			\begin{align*}
    			(\beta-\beta')\bigl({E}(\beta')-\varepsilon\bigr)\leq F(\beta)-F(\beta')\leq(\beta-\beta')\bigl({E}(\beta')+2\varepsilon\bigr),
    			\end{align*}
    			from which and the continuity of $E$, $F$ is left differentiable at $\beta$. Note that one may actually interchange the role of $\beta$ and $\beta'$ in the last inequality to get that for $\beta'>\beta,$
    			\begin{align*}
    			(\beta'-\beta)\bigl({E}(\beta)-\varepsilon\bigr)\leq F(\beta')-F(\beta)\leq(\beta'-\beta)\bigl({E}(\beta)+2\varepsilon\bigr).
    			\end{align*}
    			This leads to the right differentiability of $F$ at $\beta.$ All these together imply that $F$ is differentiable at $\beta$ and $F'(\beta)=E(\beta).$ This finishes our proof.
    		\end{proof}

    		\begin{proof}[\bf Proof of Theorem \ref{prop3}]
    			Recall that \eqref{thm8:proof:eq1} holds for all $\varepsilon>0.$ Integrating against $\varepsilon$ leads to
    			\begin{align}
    			\begin{split}
    			\label{thm7:proof:eq1}
    			\Bigl<\Bigl(\frac{H_N(\sigma)}{N}-F'(\beta)\Bigr)_{\pm}\Bigr>_\beta&\leq \int_0^\infty e^{-N\lambda (\varepsilon-\gamma_N(\beta,\lambda))}d\varepsilon=\frac{1}{N\lambda}e^{N\lambda\gamma_N(\beta,\lambda)}
    			\end{split}
    			\end{align}
    			for all $\lambda\in(0,\delta)$,	where $x_+:=\max(x,0)$ and $x_-:=\max(-x,0)$ for any $x\in\mathbb{R}.$
    			Let $0<\lambda_0<\delta.$ Observe that by the convexity of $F_N,$ for $0<\lambda<\lambda_0,$ we have
    			\begin{align*}
    			\gamma_N(\beta,\lambda)&\leq \gamma_N(\beta,\lambda_0).
    			\end{align*}
    			If $\lambda_0 N\gamma_N(\beta,\lambda_0)\leq 1,$ we take $\lambda=\lambda_0$. If $\lambda_0 N\gamma_N(\beta,\lambda_0)>1,$ then 
    			$1/N\gamma_N(\beta,\lambda_0)\leq \lambda_0$ and we let $\lambda=1/N\gamma_N(\beta,\lambda_0).$ Thus, we conclude from \eqref{thm7:proof:eq1} that
    			\begin{align*}
    			\Bigl<\Bigl(\frac{H_N(\sigma)}{N}-F'(\beta)\Bigr)_{\pm}\Bigr>_\beta&\leq e\max\Bigl(\frac{1}{N\lambda_0},\gamma_N(\beta,\lambda)\Bigr).
    			\end{align*}
    			As a result, for any $0<\lambda_0<\delta,$
    			\begin{align*}
    			\Bigl<\Bigl|\frac{H_N(\sigma)}{N}-F'(\beta)\Bigr|\Bigr>_\beta&\leq \frac{2e}{N\lambda_0}+2e\gamma_N(\beta,\lambda_0)
    			\end{align*}
    			and passing to limit gives
    			\begin{align*}
    			\limsup_{N\rightarrow\infty}\Bigl<\Bigl|\frac{H_N(\sigma)}{N}-F'(\beta)\Bigr|\Bigr>_\beta&\leq 2e\limsup_{N\rightarrow\infty}\gamma_N(\beta,\lambda_0).
    			\end{align*}
    			Here the right-hand side tends to zero as $\lambda_0\downarrow 0$ by a similar argument as \eqref{proof:eq1}. This completes our proof.
    		\end{proof}

\end{document}